\newcommand{\seqnum}[1]{\href{https://oeis.org/#1}{\underline{#1}}}
\DeclareMathOperator{\rad}{rad}
\begin{document}

\theoremstyle{plain}
\newtheorem{theorem}{Theorem}
\newtheorem{proposition}[theorem]{Proposition}

\theoremstyle{definition}
\newtheorem{definition}[theorem]{Definition}
\newtheorem{conjecture}[theorem]{Conjecture}

\begin{center}
\vskip 1cm{\LARGE\bf 
The abc Conjecture Implies That
\vskip .03in
Only Finitely Many $s$-Cullen 
\vskip .1in
Numbers Are Repunits
}
\vskip 1cm
\large
Jon Grantham \\
Hester Graves \\
Institute for Defense Analyses \\
Center for Computing Sciences \\
Bowie, Maryland 20715 \\
United States \\
\href{mailto:grantham@super.org}{grantham@super.org} \\
\href{mailto:hkgrave@super.org}{hkgrave@super.org}
\end{center}

\vskip .2 in

\begin{abstract}
Assuming the abc conjecture with $\epsilon=1/6$, we use elementary methods to show that only finitely many $s$-Cullen numbers are repunits, aside from two known infinite families.  
More precisely, only finitely many positive integers $s$, $n$, $b$, and $q$ with $s,b \geq 2$ and $n,q \geq 3$ satisfy
\[C_{s,n} = ns^n + 1 = \frac{b^q -1}{b-1}.\]
\end{abstract}

\section{Introduction}

\begin{definition}  
A \textit{Cullen number} is an number of the form $C_n = n2^n +1$, where $n$ is a positive integer. The Cullen numbers are \seqnum{A002064} in the OEIS. An \textit{$s$-Cullen number}  is a number of the form $C_{n,s} = ns^n + 1$, where $s, n$ are positive integers with $s \geq 2$. See \seqnum{A050914}, for example, for the $3$-Cullen numbers.   Cullen and Dubner (\cite{Cullen} and \cite{Dubner}) introduced the two families, respectively.  
\end{definition}

The first significant result on Cullen numbers occurred in 1976, when Hooley \cite{Hooley} showed that almost all Cullen numbers are composite. 
Caldwell \cite{caldwell} conjectured that infinitely many are prime.

Luca and St\v{a}nic\v{a} \cite{lucastan} showed that the intersection of the Cullen numbers with the Fibonacci sequence is finite. 
Marques \cite{marques} generalized this result to $s$-Cullen numbers (for fixed $s$). Bilu, Marques, and Togb\'{e} \cite{bmt} (among other results) generalized
the two previous papers' results to the intersection of $s$-Cullen numbers with other recurrence sequences.

In this paper, we consider the intersection of the $s$-Cullen numbers with a two--parameter family, namely the repunits. We are able
to prove, conditionally, that the intersection of the repunits and Cullen numbers is finite, except for two infinite families. In particular, for fixed $s$, the intersection is finite.

\begin{definition} A \textit{repunit} is a positive integer $n$ that we can write as 
\[\frac{b^q -1}{b-1} = \sum_{j=0}^{q-1} b^j = b^{q-1} + b^{q-2} + \cdots + b + 1 = (1 1 \cdots 1)_{b}\]
for some integer \textit{base} $b >1$ and some  integer exponent $q$.  Beiler \cite{Beiler} introduced the name in the 1960s.
\end{definition} 

For example, base-$10$ repunits are \seqnum{A002275} in the OEIS, base-$15$ repunits are \seqnum{A135518}, and base-$2$ repunits are \seqnum{A000225}. We impose the condition $q\ge 3$ to avoid the trivial representation of any number $x$ as $11$ in base $x-1$.

\begin{definition} 
The \textit{radical} of a positive integer $n$ is the product of all the primes that divide $n$, so if $n = \prod_{p_i | n} p_i^{a_i}$, then $\rad(n) = \prod_{p_i | n} p_i$. For example, the radical of $ 90 = 2 \cdot 3^2 \cdot 5 $ is $30 = 2 \cdot 3 \cdot 5$.
\end{definition} 

\begin{conjecture}
The \textit{abc conjecture} of Oesterl\'e \cite{Oes} and Masser \cite{Masser} states that if $a$,$b$, and $c$ are relatively prime integers such that $a + b = c$, then for any $\epsilon >0$, only finitely many $(a,b,c)$ fail to satisfy the inequality
\[ c < \rad(abc)^{1+\epsilon}.\]
\end{conjecture}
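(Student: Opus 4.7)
The statement above is the abc conjecture of Oesterl\'e and Masser, one of the most celebrated open problems in number theory; the present paper only assumes it (with $\epsilon = 1/6$) and applies the resulting inequality to Cullen--repunit intersections. No proof accepted by the community exists, so what follows is a description of the landscape rather than a viable plan of attack.

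A prospective proof would have to produce, for each $\epsilon > 0$, a bound $c < \rad(abc)^{1+\epsilon}$ holding for all but finitely many coprime triples $a+b=c$. The strongest unconditional partial result, due to Stewart and Yu, gives $\log c \ll_\epsilon \rad(abc)^{1/3+\epsilon}$ via the theory of linear forms in $p$-adic logarithms. This is already the right \emph{kind} of estimate, in that it saves arbitrarily much over the trivial bound, but it is exponentially weaker than what the abc conjecture demands, and closing that gap is the heart of the problem.

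Any serious attack would canvass the three main families of existing techniques: (i) sharpened Baker-type estimates for linear forms in logarithms, both archimedean and $p$-adic, which drive every unconditional bound in this area but appear intrinsically limited to polynomial savings on $\log c$; (ii) modular and automorphic methods in the style of Wiles, which resolve specific ternary equations such as $x^p + y^p = z^p$ but not the uniform statement abc requires; and (iii) geometric approaches, where Vojta's conjectures imply abc via heights on varieties and the function-field analogue (Mason--Stothers) is unconditionally true using the derivative on $k(t)$, but no usable arithmetic derivative is known in the number-field setting.

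The main obstacle is precisely the absence of a global multiplicative tool with polylogarithmic savings. Every available technique either gives exponential bounds on $c$, is confined to isolated equations, or operates in a setting where differentiation exists. Mochizuki's inter-universal Teichm\"uller theory announces a complete proof, but its validity remains contested. For these reasons the present paper, like essentially every arithmetic application of abc in the literature, takes the conjecture as a hypothesis rather than attempting to prove it; the technical work of the paper, beginning in the next section, lies in extracting from the hypothesis the desired finiteness statement for $s$-Cullen repunits.
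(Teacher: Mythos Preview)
Your assessment is correct: the statement is a \emph{conjecture}, not a theorem, and the paper offers no proof of it. The paper simply records the abc conjecture as a hypothesis (with $\epsilon = 1/6$) and then invokes it in the proofs of Propositions~6 and~7; there is nothing in the paper to compare your proposal against. Your survey of partial results and obstructions is accurate but extraneous to what the paper requires.
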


In the next section, we use this conjecture with $\epsilon=1/6$.

\section{Main result}

\begin{theorem}\label{mainresult}
The abc conjecture with $\epsilon=1/6$ implies that only finitely many $s$-Cullen numbers $C_{s,n}$, with $n\ge 3$, are repunits of length three or greater.  
\end{theorem}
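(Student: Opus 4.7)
The plan is to rearrange the defining equation so the abc conjecture can exploit the small radical of $ns^n$. Multiplying $ns^n+1=(b^q-1)/(b-1)$ by $b-1$ gives $b(b^{q-1}-1)=(b-1)ns^n$; since $\gcd(b,b-1)=1$, this forces $b\mid ns^n$. Writing $ns^n=bm$ reduces the equation to
\[1+(b-1)m=b^{q-1},\]
and the triple $(1,(b-1)m,b^{q-1})$ is coprime, because $(b-1)m\equiv -1\pmod b$ forces $\gcd(m,b)=1$.

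The crucial observation is that although $bm=ns^n$ is comparable in size to $b^{q-1}$, its radical is tiny: $\rad(bm)=\rad(ns^n)\le \rad(n)\rad(s)\le ns$. Combined with $\gcd((b-1)m,b)=1$, this yields
\[\rad\bigl((b-1)m\cdot b^{q-1}\bigr)=\rad((b-1)m)\,\rad(b)\le \rad(b-1)\,\rad(bm)\le (b-1)\,ns.\]
Applying the abc conjecture with $\epsilon=1/6$ to this triple therefore gives, for all but finitely many quadruples,
\[b^{q-1}<\bigl((b-1)\,ns\bigr)^{7/6}\le (bns)^{7/6}.\]

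To finish, combine this with the elementary brackets $ns^n/2<b^{q-1}\le ns^n+1$ (the left inequality uses $R<2b^{q-1}$ for $b\ge 2$, the right uses $R\ge b^{q-1}$). Feeding the lower bound into the abc inequality forces $b>s^{6n/7-1}/(2^{6/7}n^{1/7})$, while $b^{q-1}\le ns^n+1$ gives $b\le (2ns^n)^{1/(q-1)}$. Sandwiching these, taking logarithms, and using $s\ge 2$ collapses the whole system to
\[(\log 2)(6nq-13q-13n+6)<(q+6)\log n.\]
For $n,q\ge 3$ the left side grows at least linearly in both $n$ and $q$ with coefficient at least $5\log 2$, whereas the right side grows only like $q\log n$; the inequality thus fails for all sufficiently large $n$ or $q$, so only finitely many pairs $(n,q)$ survive. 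Each surviving pair pins down $s$ via the same logarithmic inequality, and then $b^{q-1}\le ns^n+1$ pins down $b$.

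The decisive technical step is the radical bookkeeping in the second paragraph: one must split the equation so that the ``$b$-side'' of the sum is the smooth power $b^{q-1}$ and the other side is $(b-1)m$ with $m=ns^n/b$, because then $\rad((b-1)m)\rad(b)$ reassembles as $\rad(b-1)\rad(ns^n)$, collapsing from something of size $R$ down to at most $(b-1)ns$. Applying abc directly to the ``obvious'' triple $(1,ns^n,R)$ fails, because $\rad(R)$ can be as large as $R$---repunits are often prime.
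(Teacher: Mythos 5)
Your proposal is correct and follows essentially the same route as the paper: the identity $(b-1)ns^n=b(b^{q-1}-1)$ and the application of the abc conjecture with $\epsilon=1/6$ to the triple $1+(b^{q-1}-1)=b^{q-1}$, whose radical is bounded by $(b-1)ns$, is exactly the paper's argument (your substitution $m=ns^n/b$ produces the same triple and the same bound). The only difference is bookkeeping: you treat length-three repunits uniformly within the general inequality and then bound the finitely many surviving $(n,q)$, $s$, and $b$ in turn, whereas the paper splits off $q=3$ as a separate, slightly sharper case with radical at most $ns$ and derives $\log_s n$ inequalities in each case.
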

We divide our theorem into two cases, each of which we prove as a proposition.  The first proposition shows that only finitely many $s$-Cullen numbers can be written as repunits of length three with $n\ge 3$, and the second shows that only finitely many $s$-Cullen numbers can be written as repunits of length greater than three with $n \ge 2$.  As the union of two finite sets is finite itself, the two propositions prove the theorem.

\begin{proposition}\label{improved}  The abc conjecture with $\epsilon=1/6$ implies that only finitely many $s$-Cullen numbers are repunits of length three with $n\ge 3$.
\end{proposition}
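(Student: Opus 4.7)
The plan is to translate the length-three repunit condition into a multiplicative identity and then invoke the abc conjecture on a very short triple. A length-three repunit in base $b$ equals $b^2 + b + 1$, so the equation $C_{s,n} = ns^n + 1 = b^2 + b + 1$ is equivalent to $ns^n = b(b+1)$. Since $\gcd(b, b+1) = 1$, the crucial observation is that
\[
\rad(b(b+1)) = \rad(ns^n) = \rad(ns) \leq n \cdot \rad(s) \leq ns.
\]

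Next I would apply the abc conjecture with $\epsilon = 1/6$ to the coprime triple $(1, b, b+1)$. With at most finitely many exceptions, it yields
\[
b + 1 < \rad(b(b+1))^{7/6} \leq (ns)^{7/6}.
\]
On the other hand, $(b+1)^2 > b^2 + b = ns^n$ furnishes the lower bound $b + 1 > \sqrt{n}\, s^{n/2}$. Combining the two inequalities produces
\[
s^{(n - 7/3)/2} < n^{2/3}.
\]

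The calibration $\epsilon = 1/6$ is exactly what forces the exponent $(n - 7/3)/2$ to be positive at $n = 3$, which matches the hypothesis $n \geq 3$ in the proposition. For $s \geq 2$ the inequality reads $(n/2 - 7/6)\log 2 < (2/3)\log n$, whose left side grows linearly in $n$ while the right side grows only logarithmically, so $n$ is bounded; then for each admissible $n$ the same inequality bounds $s$, leaving only finitely many pairs $(s, n)$. For each such pair the equation $b(b+1) = ns^n$ admits at most one positive integer solution for $b$, and adjoining the finite abc-exceptional set produces finitely many triples. I do not anticipate a serious obstacle: the single appeal to abc is to an especially clean triple, and the exponent arithmetic works with room to spare. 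The one item requiring care is the bookkeeping that upgrades ``finite in each coordinate separately'' into ``finite as a set of triples,'' together with the verification that the trivial radical bound $\rad(ns^n) \leq ns$ is not being wasted.
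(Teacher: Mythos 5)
Your proposal is correct and follows essentially the same route as the paper: rewrite the condition as $ns^n = b(b+1)$, bound $\rad(b(b+1)) = \rad(ns^n) \le ns$, apply abc with $\epsilon = 1/6$ to the triple $(1,b,b+1)$ outside a finite exceptional set, and combine with $ns^n < (b+1)^2$ to get an inequality (yours is just the square root of the paper's $s^{n-7/3} < n^{4/3}$) that leaves only finitely many pairs $(s,n)$. The only quibble is your side remark that $\epsilon = 1/6$ is ``exactly'' what makes the exponent positive at $n=3$; in fact any $\epsilon < 1/2$ would do for this proposition, and the value $1/6$ is dictated by the companion case of longer repunits.
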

\begin{proof}  
Suppose that $C_{s,n} = n s^n +1$ is a repunit of length three, i.e., 
$C_{s,n} = n s^n + 1 = b^2 + b +1$.  We then infer that $ns^n = b(b+1)$.  Let us first consider the case
$$b + 1  < \rad (b(b+1))^{\frac{7}{6}} = \rad(ns^n)^{\frac{7}{6}} <(ns)^{\frac{7}{6}},$$
so that
$$ns^n = b (b+1) < (b+1)^2 <(ns)^{\frac{7}{3}} = n^{\frac{7}{3}} s^{\frac{7}{3}}.$$
By taking logarithms, we see that
$$n-\frac{7}{3} < \frac{4}{3} \log_s(n),$$
or
\begin{equation}\label{first_log_contradiction}
3n-7 < 4\log_s(n).
\end{equation}

This equation cannot hold when $s \geq 9, n\geq 3$.  For each $s$, $2 \leq s \leq 8$, only finitely many values of $n$ satisfy Equation \ref{first_log_contradiction}.

The abc conjecture with $\epsilon =1/6$ asserts that only finitely many values $b$ satisfy
$b +1 \geq \rad(b(b+1))^{\frac{7}{6}}$, so certainly only finitely many $b$ satisfy $b+1 \geq \rad(b(b+1))^{\frac{7}{6}}$, where $b^2 + b+ 1$ is an $s$-Cullen number.  

We conclude that only finitely many $C_{s,n}$, with $n \geq 3$, are repunits of length three.
\end{proof}

\begin{proposition}  The abc conjecture with $\epsilon=1/6$ implies that only finitely many $s$-Cullen numbers with $n \geq  2$ are repunits of length greater than three.
\end{proposition}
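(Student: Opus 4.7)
The plan is to apply the abc conjecture with $\epsilon = 1/6$ to the identity $b^{q-1} = 1 + (b-1)R'$, where $R' := (b^{q-1}-1)/(b-1) = 1 + b + \cdots + b^{q-2}$, exploiting a tight radical bound coming from the factorization $bR' = ns^n$. First I would note the algebraic identity
\[ R - 1 = \frac{b^q - b}{b-1} = b R', \]
so combined with $R = ns^n + 1$ we get $ns^n = bR'$, and consequently $\rad(bR') = \rad(ns^n) = \rad(ns) \leq ns$. This gives the crucial sharp bound
\[ \rad(b(b-1)R') = \rad\bigl((b-1) \cdot bR'\bigr) = \rad\bigl((b-1)\cdot ns^n\bigr) \leq (b-1) \cdot ns < b \cdot ns. \]

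Applying abc to the (automatically coprime) triple $(1, (b-1)R', b^{q-1})$ then yields
\[ b^{q-1} < \rad(b(b-1)R')^{7/6} \leq (b \cdot ns)^{7/6}, \]
which rearranges to $b^{q - 13/6} < (ns)^{7/6}$. Since $b \geq 2$, this already forces $q = O(\log(ns))$. Combining with the size bounds $R \leq qb^{q-1}$ (giving $(q-1)\log b \geq n \log s - \log q$) and $b^{q-1} \leq R = ns^n + 1$, a short manipulation yields
\[ (n - \alpha) \log s < \log q + \alpha \log n, \qquad \alpha := \frac{7(q-1)}{6q - 13}. \]
For every $q \geq 4$, $\alpha \leq 21/11 < 2$, so the coefficient $n - \alpha$ is at least $1/11$ for every $n \geq 2$. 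This bounds $\log s$ in terms of $\log n$ and $\log q$; combined with $q = O(\log(ns))$, a routine asymptotic argument forces $n$ (and hence $s$, $q$, $b$) into a finite set. Since the abc conjecture itself permits only finitely many triples $(1, (b-1)R', b^{q-1})$ where the abc inequality fails, altogether only finitely many $(s, n, b, q)$ satisfy the Cullen--repunit equation.

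The main obstacle is obtaining the sharp radical bound above. A naive estimate such as $\rad(b(b-1)R') \leq b(b-1)\rad(R') \leq b^2 \cdot ns$ yields only $b^{q - 10/3} < (ns)^{7/6}$, which is insufficient for small pairs like $(n,q) = (2,4)$, where the natural size relationship $b \sim s^{2/3}$ remains compatible with that weaker inequality. The observation that $bR' = ns^n$ exactly---so that the primes of $bR'$ are already absorbed into $\rad(ns)$---is what makes the argument succeed uniformly over all $n \geq 2$ and $q \geq 4$.
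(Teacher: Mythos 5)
Your proposal is correct and rests on the same key step as the paper's proof: applying the abc conjecture with $\epsilon=1/6$ to the triple $1 + (b^{q-1}-1) = b^{q-1}$ and controlling the radical through the identity $ns^n = b\,\frac{b^{q-1}-1}{b-1}$ (equivalently $(b-1)ns^n = b(b^{q-1}-1)$), which yields $b^{q-1} < (bns)^{7/6}$ outside a finite exceptional set. The only divergence is in the elementary endgame: where the paper substitutes the exact identity together with $b^{q-1} < ns^n$ to reach the $q$-free inequality $11n-21 < 10\log_s n$ and an explicit finite list of potential exceptions, you use the slightly lossier bound $R \le q\,b^{q-1}$, which leaves a $\log q$ term in $(n-\alpha)\log s < \log q + \alpha\log n$ that you then absorb via $q = O(\log(ns))$ and a bootstrapping argument; that finish is valid, just less explicit.
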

\begin{proof} 
Suppose that $C_{s,n} = ns^n +1$ is a repunit of length greater than three, i.e., that $C_{s,n}$ can written as $\frac{b^q -1}{b-1} = b^{q-1} + b^{q-2} + \cdots + b +1$ for some $b \geq 2, q \geq 4$. 
Assuming this supposition,

$$ns^n = b(b^{q-2} + \cdots + b +1) = b \left ( \frac{b^{q-1} -1}{b-1} \right ),$$
which we rewrite as
$$(b-1) n s^n = b(b^{q-1} -1).$$

In the case where $b^{q-1} < \rad (b(b^{q-1}-1))^{\frac{7}{6}}$, then
$$b^{q-1} < \rad ((b-1)n s^n)^{\frac{7}{6}}$$
and thus
$$b^{q-1} -1 < (ns)^{\frac{7}{6}} (b-1)^{\frac{7}{6}},$$
which we can rewrite as
$$\frac{b(b^{q-1} -1)}{b-1} < (ns)^{\frac{7}{6}} b(b-1)^{\frac{1}{6}}.$$

The previous inequality shows us that
$$ ns^n = \frac{b(b^{q-1} -1)}{b-1}  < (nsb)^{\frac{7}{6}},$$
or
\begin{equation}
\label{eq:1}
s^{n-\frac{7}{6}} < n^{\frac{1}{6}}b^{\frac{7}{6}}.
\end{equation}

We know that 
$ns^n >b^{q-1}$, so 
$(ns^n)^{\frac{7}{6(q-1)}} >b^{\frac{7}{6}}$.  This inequality then gives us a further upper bound on Inequality \ref{eq:1}, as 
$$s^{n-\frac{7}{6}} < n^{\frac{1}{6}}b^{\frac{7}{6}}< n^{\frac{q + 6}{6(q-1)}} s^{\frac{7n}{6(q-1)}}$$
so
$$s^{n-\frac{7}{6} - \frac{7n}{6(q-1)}} < n^{\frac{q + 6}{6(q-1)}}.$$
If we take the log base $s$ of both sides, we see that 
$$n-\frac{7}{6} - \frac{7n}{6(q-1)} < \frac{q + 6}{6(q-1)}\log_s n,$$
or 
$$n - \frac{7(q-1)}{6q -13}   < \frac{q + 6}{6q-13}\log_s n.$$
We assumed that  $q \geq 4$, giving us 
$$n - \frac{21}{11} <  \frac{10}{11}\log_s n,$$
or 
\begin{equation}\label{second_log_contradiction}
11 n - 21 <  10\log_s n.
\end{equation}
We can see that this equation cannot hold if $s \geq 3$ and $n \geq 3$, or if  $s\geq 1025 $ and $n =2$.  Thus, the only potential repunits are $C_{s, 2}$, with $2 \leq s \leq 1024$.

Only finitely many $b^{q-1}$ satisfy $b^{q-1} > \rad (b(b^{q-1}-1))^{\frac{7}{6}}$, so only finitely many $s$-Cullen numbers can be written as 
$\frac{b^q -1}{b-1} = b^{q-1} + b^{q-2} + \cdots + b +1$ for said values of $b$, and thus we can conclude that only finitely many $s$-Cullen numbers with $n \geq 2$ are repunits of length greater than three.

\end{proof}

\section{Known examples} 

Two infinite families of $s$-Cullen repunits exist, but each produces at most one example for each $s$.

We see that $C_{s,1}$ is a repunit of length at least $3$ exactly when $s+1$ is. 

When $C_{s,2}$ is a repunit of length $3$, we know that $2s^2+1=x^2+x+1$ for some $s$.  Thus $s^2=x(x+1)/2$, and $s^2$ is a square triangular number. See \seqnum{A001110}. 
These numbers were characterized by Euler \cite{euler}, and help us find the infinite family $C_{6,2}$, $C_{35,2}$, $C_{204,2},\ldots$ of $s$-Cullen numbers which are length-three repunits. 

Other than the two families noted above, no $s$-Cullen numbers $C_{s,n}$  are also repunits with $s\le 100$ and $n\le 100$ or with $s\le 10^6$ and $n\le 10$. The PARI \cite{PARI} code for this computation is at \href{https://github.com/31and8191/Cullen}{github.com/31and8191/Cullen}. It is an open problem whether any $s$-Cullen repunits exist outside the two families characterized above.

\section{Acknowledgments} Both authors would like to thank the referee for the insightful and encouraging report, which led to a significantly improved Theorem \ref{mainresult}.

The second author would like to thank Ms.\ Joemese Malloy and the Thurgood Marshall Child Development Center for their extraordinary measures that allowed her to  do mathematics with peace of mind, knowing that her child was safe, loved, and well-cared for during the pandemic.  As always, she would like to thank her husband, Loren LaLonde, whose support was all the more meaningful during this difficult time.  

The first author would like to thank his wife, Christina Ruiz Grantham, for her patience during his unexpected arrival as mathematician-in-residence at the onset of the pandemic. He thanks his children, Salem and Jack, for their enthusiasm at his insertion of number theory into their elementary-school mathematics curriculum.

\bigskip
\hrule
\bigskip

\noindent 2020 {\it Mathematics Subject Classification}:
Primary 11B83.

\noindent \emph{Keywords:} Generalized Cullen Number, Cullen Number, repunit, abc conjecture.

\bigskip
\hrule
\bigskip

\noindent (Concerned with sequences
\seqnum{A002064} and
\seqnum{A002275}.)


\begin{thebibliography}{10}

\bibitem{Beiler}
Albert~H. Beiler, {\em Recreations in the {T}heory of {N}umbers: {T}he {Q}ueen
  of {M}athematics {E}ntertains}, Vol.~4, Dover Recreational Math, 2nd edition,
  2013. First edition published 1964.

\bibitem{bmt}
Yuri Bilu, Diego Marques, and Alain Togb\'{e}, Generalized {C}ullen numbers in
  linear recurrence sequences, {\em J. Number Theory} {\bf 202} (2019),
  412--425.

\bibitem{caldwell}
Chris~K. Caldwell, An amazing prime heuristic, 2000. Available at
  \url{https://www.utm.edu/staff/caldwell/preprints/Heuristics.pdf}

\bibitem{Cullen}
James Cullen, Question 15897, {\em Educ. Times}  (1905), 534.

\bibitem{Dubner}
Harvey Dubner, Generalized {C}ullen numbers, {\em J. Recreat. Math.} {\bf 21}
  (1989), 190--194.

\bibitem{euler}
Leonhard Euler, Regula facilis problemata {D}iophantea per numeros integros
  expedite resolvendi, {\em M\'emoires de l'Acad\'emie des Sciences de
  St.-P\'etersbourg} {\bf 4} (1813), 3--17. Available at
  \url{https://scholarlycommons.pacific.edu/euler-works/739/}.

\bibitem{Hooley}
C.~Hooley, {\em Applications of {Sieve} {M}ethods to the {T}heory of
  {N}umbers}, Cambridge University Press, 1976. Cambridge Tracts in Mathematics, No. 70.

\bibitem{lucastan}
Florian Luca and Pantelimon St\v{a}nic\v{a}, Cullen numbers in binary recurrent
  sequences. In {\em Applications of {F}ibonacci {N}umbers. {V}ol. 9},  Kluwer Acad. Publ., 2004, pp.
  167--175.

\bibitem{marques}
Diego Marques, On generalized {C}ullen and {W}oodall numbers that are also
  {F}ibonacci numbers, {\em J. Integer Seq.} {\bf 17} (2014), Article 14.9.4.

\bibitem{Masser}
D.~W. Masser, Abcological anecdotes, {\em Mathematika} {\bf 63} (2017),
  713--714.

\bibitem{Oes}
Joseph Oesterl\'{e}, Nouvelles approches du ``th\'{e}or\`eme'' de {F}ermat,
  {\em Ast\'{e}risque} {\bf 161-162} (1988), 165--186. S\'{e}minaire Bourbaki. Vol.\ 1987/88, Exp. No. 694.

\bibitem{PARI}
{The PARI~Group}, Univ. Bordeaux, {\em {PARI/GP {V}ersion \texttt{2.11.0}}},
  2018. Available at \url{http://pari.math.u-bordeaux.fr/}.

\end{thebibliography}
\end{document}